\definecolor{webgreen}{rgb}{0,.5,0}
\definecolor{webbrown}{rgb}{.6,0,0}
\newcommand{\seqnum}[1]{\href{http://oeis.org/#1}{\underline{#1}}}
\begin{document}


\theoremstyle{plain}
\newtheorem{theorem}{Theorem}
\newtheorem{corollary}[theorem]{Corollary}
\newtheorem{lemma}[theorem]{Lemma}
\newtheorem{proposition}[theorem]{Proposition}

\theoremstyle{definition}
\newtheorem{definition}[theorem]{Definition}
\newtheorem{example}[theorem]{Example}
\newtheorem{conjecture}[theorem]{Conjecture}
\newtheorem{identity}{Identity}

\theoremstyle{remark}
\newtheorem{remark}[theorem]{Remark}

\begin{center}
\vskip 1cm{\LARGE\bf 
Variants of Base 3 over 2
}
\vskip 1cm
\large
Matvey Borodin, Hannah Han, Kaylee Ji, Alexander Peng, David Sun, Isabel Tu, Jason Yang, William Yang, Kevin Zhang, Kevin Zhao \\ 
PRIMES STEP, Dept. Math\\
MIT\\
77 Mass. Ave \\
Cambridge,  MA 02139  \\
USA \\
\href{mailto: primes.step@gmail.com}{\tt primes.step@gmail.com}
\ \\
Tanya Khovanova \\
Department of Mathematics\\ 
MIT\\
77 Mass. Ave \\
Cambridge,  MA 02139  \\
USA \\
\href{mailto: tanyakh@yahoo.com}{\tt tanyakh@yahoo.com}
\end{center}



\begin{abstract}
We discuss two different systems of number representations that both can be called \textit{base 3/2}. We explain how they are connected. Unlike classical fractional extension, these two systems provide a finite representation for integers. We also discuss a connection between these systems and 3-free sequences.
\end{abstract}


\section{Introduction}

A traditional non-integer base $\beta$ was explored by R\'{e}nyi \cite{R} and Parry \cite{P}. It represented numbers using digits not-exceeding $\beta$. Every nonnegative real number can be represented as a string of digits, usually using the radix point in such bases. Integers are usually represented as infinite strings.

A different cool concept called \textit{exploding dots} was invented by Propp \cite{JP} and popularized by Tanton \cite{JT}. For a rational base $b/a$ it allows using digits below $a+b$. The advantage of this approach is that integers can be represented by finite strings. These bases were thoroughly studied by Akiyama and others in \cite{AFS}.

In this paper, we are interested in base 3/2, which represents integers using digits 0, 1, and 2. We discovered sequence A256785 in the OEIS \cite{OEIS}, which uses digits 0 and 1, and symbol H to represent integers. We call this base, base 1.5 to differentiate it from base 3/2. We discovered an isomorphism between the two bases.

While writing the results, we stumbled on another sequence, A265316, that was even more surprising. Consider the following sequence: Take even numbers written in base 3/2 using exploding dots with digits 0, 1, and 2. Then interpret the result in ternary. When we plugged in the results we got sequence A265316: Consider a greedy way to divide non-negative integers into an infinite set of sequences not containing a 3-term arithmetic progression. The sequence A265316 is the set of first elements of these sequences.

Here is how this paper is arranged. In Section~\ref{sec:explodingdots} we introduce exploding dots. In Section~\ref{sec:base32} we describe a particular case of exploding dots called $2 \leftarrow 3$ machine, corresponding to base 3/2. This base uses digits 0, 1, and 2 in their expansions. In Section~\ref{sec:base15} we discuss the base 1.5 introduced in sequence A256785 which uses digits 0 and 1, and symbol H.

In Section~\ref{sec:mysteryseq} we define sequence A265316 and discuss its connections to the base 3/2. We do not completely prove the fact that these sequences are the same, but we prove a lot of properties for both sequences. 

In Section~\ref{sec:differentways} we explore several natural ways to represent the same number in base 1.5. In Section~\ref{sec:isomorphism} we produce an isomorphism between the two bases 1.5 and 3/2.

This research was done by the PRIMES STEP junior group. PRIMES STEP is a program based at MIT for students in grades 6-9 to try research in mathematics. 

\section{Exploding Dots}\label{sec:explodingdots}

Here we explain exploding dots. We start with a row of boxes that can be extended to the left. We label the boxes from right to left. The rightmost box is labeled zero. The second one to the right is box 1, the third to the right is box 2 and so on.

We also have an integer $b$ that is our base. Consider integer $N$. To find its value in base $b$, we place $N$ dots in box 0. Now we allow explosions. As soon as there are $b$ dots in box $k$, they, BOOM, explode. That means we remove $b$ dots from box $k$ and add one dot in the box to the left, which is of course, numbered $k+1$. We continue exploding until nothing can explode anymore, meaning each box has no more than $b$ dots. This process is called a $1 \leftarrow b$ machine. At the end, we write the number of dots in each box from left to right, dropping the leading zeros. The result is the representation of integer $N$ in base $b$.

For example, to calculate 5 in base 2, we start with 5 dots in the rightmost box, box 0. We can represent this state of our machine as 5. Since we have more than two dots, each pair of dots explodes adding a dot to the box directly to the left. As there are two pairs, we add two dots to box 1 and remove 4 dots from box 0. We can represent the result as 21: one dot in the rightmost box and two dots in the box to the left. Now there are two dots together in box 1; therefore, we have another BOOM, which results in base-2 representation of 5: $5_{10} = 101_2$, see Figure~\ref{fig:base2}.

\begin{figure}[ht]
\centering
\includegraphics[scale=0.7]{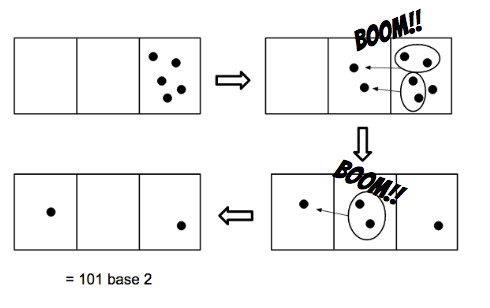}
\caption{Exploding dots show how to represent 5 in base 2}\label{fig:base2}
\end{figure}

The cool part about the exploding-dots machines is that they are easily generalizable to rational bases. The $a \leftarrow b$ machine is a machine where each time there are at least $b$ dots in a box, there is an explosion. An explosion in box $k$ wipes out $b$ dots from box $k$, while adding $a$ dots to box $k+1$. To represent an integer $N$, we start with $N$ dots in box zero. After the fireworks are over, that is all boxes have fewer than $b$ dots, we read the number of dots from left to right starting with the left most non-empty box. The result is the representation of $N$ in base $b/a$. We number the digits of this representation similar to the way we number boxes: from right to left: $d_k d_{k-1}\ldots d_1d_0$.

For example, to calculate 5 in base 3/2, we start with 5 dots in the rightmost box, box 0. We can represent this state of our machine as 5. Since we have more than three dots we have an explosion: the number of dots in the rightmost box decreases by 3 and we add 2 dots to the box on the left. The result is 22: which is the base-3/2 representation of 5: $5_{10} = 22_{3/2}$, see Figure~\ref{fig:base3over2}.

\begin{figure}[ht]
\centering
\includegraphics[scale=0.6]{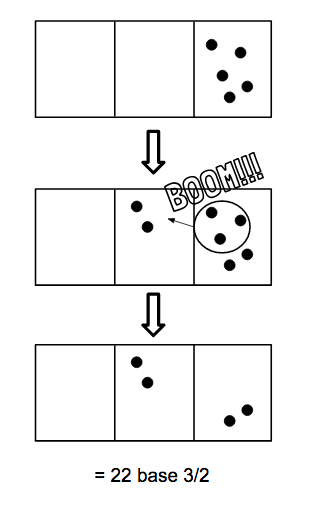}
\caption{Exploding dots show how to represent 5 in base 3/2.}\label{fig:base3over2}
\end{figure}

\section{Base 3/2}\label{sec:base32}

The $2 \leftarrow 3$ machine is a machine where three dots explode generating two new dots in the box on the left. 

For example, number 7 in base 3/2 becomes 211. 

The first several numbers written in base 3/2 form sequence A024629 in the OEIS \cite{OEIS}: \[0, 1, 2, 20, 21, 22, 210, 211, 212, 2100, \ldots.\]

Here are some awesome properties of base 3/2 \cite{JP,JT}:

\begin{itemize}
\item Every integer only uses digits 0, 1, and 2.
\item Every number starting with 2 starts with 2.
\item Every number starting with 8 starts with 21 followed by either 0 or 2.
\item The last digit repeats in a cycle of 3, the last two digits repeat in a cycle of 9, and so on: the last $k$ digits repeat in a cycle of $3^k$. 
\item Removing one or several last digits of an integer in this base gives another integer in the base.
\end{itemize}

It is interesting to note that base 6/4 is different from base 3/2. For example, numbers in base 6/4 can have 5 as a digit, while numbers in base 3/2 can not. For this reason, it is important not to reduce the fraction to simplest terms in this definition of the base. In particular, it is important to call this base, base 3/2, not base 1.5.

The digits in base $b/a$ represent how the integer $N$ can be decomposed into powers of $3/2$ \cite{JP}.

\begin{lemma}
If $d_kd_{k-1}\ldots d_1d_0$ is a representation of integer $N$ in base $3/2$, then 
\[N = \sum_{i=0}^k d_i\frac{3^i}{2^i}.\]
\end{lemma}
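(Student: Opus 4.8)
The plan is to prove the identity by exhibiting a quantity that is preserved by every explosion of the $2 \leftarrow 3$ machine, and then comparing its value at the start and at the end of the process. I would assign to a single dot sitting in box $i$ the \emph{weight} $3^i/2^i$, and to any state of the machine the total weight
\[
S = \sum_{i \ge 0} c_i \frac{3^i}{2^i},
\]
where $c_i$ denotes the number of dots currently in box $i$; this is a finite sum since only finitely many boxes are ever occupied.

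First I would check that $S$ is invariant under a single explosion. An explosion in box $j$ replaces $c_j$ by $c_j - 3$ and $c_{j+1}$ by $c_{j+1} + 2$, leaving all other boxes untouched, so $S$ changes by
\[
-3\cdot\frac{3^j}{2^j} + 2\cdot\frac{3^{j+1}}{2^{j+1}} = -3\cdot\frac{3^j}{2^j} + 3\cdot\frac{3^j}{2^j} = 0.
\]
This is exactly the relation between consecutive weights forced by an explosion wiping out $3$ dots and producing $2$, and it is precisely the reason the base is $3/2$.

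Next I would evaluate $S$ at the two ends of the computation. At the start, all $N$ dots sit in box $0$, where the weight is $1$, so $S = N$. When the process halts, box $i$ contains exactly $d_i$ dots for each $i$, so $S = \sum_{i=0}^k d_i\, 3^i/2^i$. Since $S$ is unchanged by each of the intervening explosions, these two values must be equal, which is the claimed identity.

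The only genuine gap to address is that the process actually halts, so that the final state is well defined. This is easy: each explosion removes three dots and adds two, so the total number of dots strictly decreases by one at every step while remaining a nonnegative integer, whence only finitely many explosions can occur. Note that I do not need the stronger confluence fact that the halting configuration is independent of the order of explosions, since the statement takes a given representation $d_k\ldots d_0$ as input and the invariance of $S$ pins down its value regardless of how that representation was reached. The main conceptual step is simply spotting the correct weighting $3^i/2^i$, after which the argument reduces to a one-line invariance check.
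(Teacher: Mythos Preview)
Your proof is correct. The paper does not actually supply a proof of this lemma; it simply states the result with a citation to Propp, so you have filled in what the paper omits. Your invariance argument---assigning weight $(3/2)^i$ to box $i$, checking that an explosion preserves total weight, and comparing the initial state to the halting state---is exactly the standard justification, and your termination remark (total dot count drops by one per explosion) is the right way to close it off.
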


\section{Base 1.5}\label{sec:base15}

A definition of base 1.5 is given in sequence A256785 in the OEIS \cite{OEIS}. This base uses three symbols: 0, 1, and H. Symbol H represents 0.5. Letter H was likely chosen because of the word \textbf{h}alf. For emphasis, we use 1.5 instead of $\frac{3}{2}$ in this section where appropriate.

Here are a few rational numbers using these three digits in ascending order of the number values: 
\[\text{0, H, H0, 1, H00, HH, 10, H0H, H000, H1, HH0, 1H, H01, H00H, 100, }
\ldots.\]

We call this sequence the \textit{ascending} sequence and denote it as $A_n$. The corresponding values are:
\[0, 0.5, 0.75, 1, 1.125, 1.25, 1.5, 1.625, 1.6875, 1.75, 1.875, 2, 2.125, 2.1875, 2.25,
\ldots.\]

One might wonder how it could be possible to write this sequence: that is, why are we always able to find the next number in value in an infinite set of numbers? The smallest number with $j$ digits is H00...0: it has $j-1$ zeros and the value of $0.5\cdot 1.5^{j-1}$. Since this value increases as $j$ increases, to find all numbers that are less than $0.5\cdot 1.5^{j-1}$, we only need to have a finite check of all the numbers with less than $j$ digits.

An oddity of this base, although expected, is that not all of these numbers are integers. The indices of integers in this sequence are:
\[0, 3, 11, 25, 46, 77, 117, 169, 232, 308, 401, 508, 631, 771, 929, 1108, 1308, \ldots,\]
which is now sequence A320035.


The first few natural numbers written in this base are:

\[1 =1, \ 2 = 1\text{H}, \ 3 = 1\text{H}0, \ 4 = 1\text{H}1, \ 5 = 1\text{H}0\text{H}, \ 6 = 1\text{H}10, 7 = 1\text{H}11.\]

The weird thing about base 1.5 is that an $i$-digit number might be smaller than a $j$-digit number where $i > j$. 

Other than the ascending order, there is another reasonable order to write these numbers in: we call it the \textit{dictionary} order. Consider numbers that use only zeros and two other digits $a < b$. Write the numbers in the increasing order. Replace $a$ by H, and $b$ by 1. In this order, the numbers with more digits will go after the numbers with fewer digits. Did we mention that this base is weird? The sequence of base 1.5 rational numbers in the dictionary order is sequence $B_n$:

\[\text{0, H, 1, H0, HH, H1, 10, 1H, 11, H00, H0H, H01, HH0, HHH, HH1, }\ldots.\]

The corresponding values are:

\[0, 0.5, 1, 0.75, 1.25, 1.75, 1.5, 2, 2.5, 1.125, 1.625, 2.125, 1.875, 2.375, 2.875, \ldots.\]

The indices of integers in the dictionary ordered sequence are: 
\[0, 2, 7, 21, 23, 64, 69, 71, 193, 207, \ldots.\]

This is the sequence A265316. The sequence A265316 is not related to any base. We will discuss this unexpected connection in Section~\ref{sec:mysteryseq}.

The values in the dictionary order sequence go up and down, whereas if it were an integer base, they would always go up. To be precise, the dictionary order sequence goes up, up, down in a cyclic manner. 

\begin{lemma}
For integer $k \geq 0$, $B_{3k}<B_{3k+1}<B_{3k+2}$, while $B_{3k+2} > B_{3k+3}$.
\end{lemma}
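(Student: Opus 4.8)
The plan is to exploit the way the dictionary order splits into blocks by string length. First I would record the value formula for this base: setting $v(0)=0$, $v(\mathrm{H})=\frac{1}{2}$, and $v(1)=1$, a string $d_{j-1}\cdots d_1 d_0$ represents the number $\sum_{i=0}^{j-1} v(d_i)\left(\frac{3}{2}\right)^i$, exactly as in the base $3/2$ lemma. I would then count strings by length: length $1$ contributes the three strings $0,\mathrm{H},1$, and each length $j\ge 2$ contributes $2\cdot 3^{j-1}$ strings (leading digit $\mathrm{H}$ or $1$). In particular every length block has size divisible by $3$, so each block begins at an index divisible by $3$. Hence each triple $B_{3k},B_{3k+1},B_{3k+2}$ lies inside a single length block, and no triple straddles a change of length.

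The second step is to identify the last digit and common prefix of a triple. Within a length-$j$ block the strings are listed in lexicographic order for the digit order $0<\mathrm{H}<1$, so a string with prefix $t$ and last digit $d_0$ has within-block rank $3\rho(t)+\tau(d_0)$, where $\rho(t)$ is the rank of the prefix and $\tau(d_0)\in\{0,1,2\}$ records whether $d_0$ is $0$, $\mathrm{H}$, or $1$. Because the block starts at a multiple of $3$, the global index is congruent to $\tau(d_0)$ modulo $3$; thus $B_{3k},B_{3k+1},B_{3k+2}$ end in $0,\mathrm{H},1$ respectively, and having consecutive within-block ranks $3r,3r+1,3r+2$ they share one common prefix $P$. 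Writing $V$ for the value of $P$ placed in its positions, the three values are $V$, $V+\frac{1}{2}$, and $V+1$, which gives $B_{3k}<B_{3k+1}<B_{3k+2}$ at once.

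For the descent $B_{3k+2}>B_{3k+3}$ I would compute the dictionary successor of $B_{3k+2}=P1$. Its last digit is the maximal symbol $1$, so forming the successor zeroes out the whole maximal trailing run of $1$'s and raises the lowest digit that is not equal to $1$ by a half-step ($0\to\mathrm{H}$ or $\mathrm{H}\to 1$); when $P$ consists entirely of $1$'s this carry spills into a new leading $\mathrm{H}$ and the length increases by one, which is exactly the block-boundary situation. Let $m\ge 1$ be the position that is raised. Only positions $0,\dots,m$ change, and a short computation gives $B_{3k+2}-B_{3k+3}=\sum_{i=0}^{m-1}\left(\frac{3}{2}\right)^i-\frac{1}{2}\left(\frac{3}{2}\right)^m=\left(\frac{3}{2}\right)^{m+1}-2\ge\left(\frac{3}{2}\right)^{2}-2=\frac{1}{4}>0$. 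The main obstacle is handling this successor operation uniformly: one must see that the within-block carry and the block-boundary carry are the same computation (with $m=j$ and an implicit leading $0$ in the latter), and must confirm the modular alignment of the blocks so that each triple really does consist of one prefix followed by the last digits $0,\mathrm{H},1$.
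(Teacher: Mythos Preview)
Your proof is correct and follows essentially the same approach as the paper: both arguments observe that each triple $B_{3k},B_{3k+1},B_{3k+2}$ shares a common prefix with last digits $0,\mathrm{H},1$, and then analyze the dictionary successor of $B_{3k+2}$ as a carry that zeros out the trailing run of $1$'s and raises the next digit by a half-step. Your version is somewhat tidier in that you explicitly justify the modular alignment of the length blocks (which the paper leaves implicit) and you handle the descent with a single formula in the carry position $m\ge 1$, whereas the paper splits the same computation into the three sub-cases $m=1$ (last two digits $01$ or $\mathrm{H}1$) and $m\ge 2$ (last two digits $11$).
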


\begin{proof}
Numbers $B_{3k}$, $B_{3k+1}$, and $B_{3k+2}$ only differ in the last digit. Therefore, we have $B_{3k+2} = B_{3k+1}+ 1/2 = B_{3k}+1$. This proves the first part of the lemma. 

For the second part we look at the last two digits. If the last two digits of $B_{3k+2}$ are 01, then $B_{3k+3}$ has the same prefix and the last two digits H0. The statement follows from the fact that $\text{H}0 < 1$. If the last two digits of $B_{3k+2}$ are H1, then $B_{3k+3}$ has the same prefix and the last two digits 10. The statement follows from the fact that $10 < \text{H}1$.

Suppose the last two digits of $B_{3k+2}$ are 11. Let us assume that $B_{3k+2}$ ends with $k \geq 2$ ones. We denote the digit before the last run of ones in $B_{3k+2}$ as $b$, where $b$ is either 0 or H. Then number $B_{3k+3}$ differs from $B_{3k+2}$ only in the last $k+1$ digits. The last $k+1$ digits of $B_{3k+2}$ are $b+0.5$ followed by $k$ zeros. Therefore the difference $B_{3k+2}-B_{3k+3}$ is:
\[1.5^{k-1} + 1.5^{k-2} + \cdots + 1.5^{1} + 1.5^{0} -  \frac{1}{2} \cdot 1.5^{k}.\]
Summing the geometric series we get
\[2 \left(1.5^{k} - 1\right) - \frac{1}{2} \cdot 1.5^{k} = 1.5^{k+1} -2.\]
The fact that $k \geq 2$ means the difference is positive.
\end{proof}

We want to introduce some marvelous sequences that show the connection between the ascending order and the dictionary order. The first sequence shows the value order when the numbers are arranged in the dictionary order. In other words, our sequence $a(n)$ is such that $a(n) = k$, if $A_k=B_n$. This is always possible because the sequences $A_n$ and $B_n$ contain the same numbers, just in a different permutation. This sequence is now A320274: 
\[0, 1, 3, 2, 5, 9, 6, 11, 17, 4, 7, 12, 10, 15, 23, 19, 27, 37, 14, 21, 29, 25, 34, 46,
 \ldots.\]

Similarly, we can define sequence $b(n)$ so that $b(n) = k$, if $B_k=A_n$. This is now sequence A320273:
\[0, 1, 3, 2, 9, 4, 6, 10, 27, 5, 12, 7, 11, 28, 18, 13, 30, 8, 81, 15, 29, 19, 
 \ldots.\]

The two sequences above are permutations of non-negative integers. Therefore, they contain every number. By definition, they are inverses of each other.


\section{The mysteries of sequence A265316}\label{sec:mysteryseq}

\subsection{The definition of A265316}

Now we go back to sequence A265316, which appeared here as indices of integers when numbers containing digits 0, 1, and 2 are written in the dictionary order and interpreted in base 3/2. We call this sequence the \textit{Stanley cross-sequence}: 
\[0, 2, 7, 21, 23, 64, 69, 71, 193, 207, 209, 214, \ldots.\]

Before providing the official definition of the sequence, we give several other definitions. A \textit{3-free} sequence is an integer sequence with no three elements forming an arithmetic progression. Given a start of a sequence of non-negative integers, the \textit{Stanley sequence} is a lexicographically smallest 3-free sequence with the given start \cite{OS}. The simplest Stanley sequence is the one that starts with 0, 1. It is sequence A005836 in the OEIS \cite{OEIS}: 
\[0, 1, 3, 4, 9, 10, 12, 13, 27, 28, 30, \ldots.\]

Now we are ready to give a description of sequence A265316 from the OEIS \cite{OEIS}. 

\begin{enumerate}
\item Consider the simplest Stanley sequence: 0, 1, 3, 4, 9, 10 and so on. We denote this sequence $S_0$. This sequence can be described as non-negative integers that do not contain 2 in their ternary representation.  
\item Then we use the leftover integers and build a new minimal 3-free sequence. The new sequence is 2, 5, 6, 11, 14 and so on. This sequence is now sequence A323398 in the OEIS. We denote this new sequence $S_1$.
\item Then we exclude this sequence too and continue building a new greedy 3-free sequence $S_2$: 7, 8, 16, 17, 19, 20, 34, and so on. This sequence is now sequence A323418 in the OEIS.
\item We continue this procedure to the new sequence $S_3$: 21, 22, 24, 25, 48, 49, 51, and so on, which is now sequence A323419 in the OEIS.
\item It is known \cite{R} that 3-free sequences have density zero. Therefore, we can build an infinite number of such sequences. The starting numbers of these series of sequences form sequence A265316 which is the object of this section. That is A265316$(n)$ is the first term of $S_n$.
\end{enumerate}

\subsection{Greedy 3-free sequences in base 3/2}

We now want to repeat the procedure of building 3-free sequences in base 3/2 using not just integers, but all finite strings containing three digits 0, 1, and 2. We call these numbers \textit{integer-like} numbers.  

It is wildly known \cite{OS} that the lexicographically first 3-free sequence, that is the simplest Stanley sequence, are numbers that are represented in base 3 without twos.

Our situation is similar and different at the same time. Integer-like numbers that are represented in base 3/2 have different values than the same strings interpreted in base 3. Also, there are two different natural orders on all integer-like numbers written with 0, 1, and 2. One is the value order if they are interpreted in base 3 or ten, and the other one when they are interpreted in base 3/2. The second order is different from the first. For example, $10 > 2$ in the first order and $10 < 2$ in the second. The first order is the dictionary order we described before. The good news: the numbers without twos will be ordered the same way in both orders.

We want to show that the lexicographically first sequence in integer-like numbers independently of which order, base 3 value or base 3/2 value, we choose is the same sequence: 

\begin{lemma}
The sequence of integer-like numbers in base 3/2 that does not contain twos in their base 3/2 interpretation is a 3-free sequence. Moreover, this sequence is the lexicographically first 3-free sequence in both orders.
\end{lemma}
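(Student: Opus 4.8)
The plan is to separate the claim into two parts: (i) the set $C$ of integer-like numbers whose digits are only $0$ and $1$ is $3$-free, and (ii) $C$ is the greedy (equivalently, lexicographically first) $3$-free sequence, and then to check that both parts go through verbatim whether we measure values in base $3$ (the dictionary order) or in base $3/2$.

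For (i), suppose $a, b, c \in C$ are distinct and form an arithmetic progression, say $a + c = 2b$, and set $e_i = a_i + c_i - 2 b_i$ at each digit position $i$. Since $a_i, b_i, c_i \in \{0,1\}$ we always have $|e_i| \le 2$. Writing out the progression condition gives $\sum_i e_i 3^i = 0$ in base $3$, and $\sum_i e_i (3/2)^i = 0$ in base $3/2$; clearing denominators in the latter by multiplying by $2^m$ (with $m$ the largest index occurring) turns it into $\sum_i e_i 3^i 2^{m-i} = 0$. In both identities the coefficient of $3^i$ is a unit modulo $3$ (namely $1$, respectively $2^{m-i}$), so a $3$-adic valuation argument applies: if some $e_j \ne 0$, take the least such $j$; then the whole sum has $3$-adic valuation exactly $j$, because $|e_j| \le 2$ forces $3 \nmid e_j$, contradicting that the sum is $0$. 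Hence all $e_i = 0$, which forces $a_i = b_i = c_i$ for every $i$, contradicting distinctness. The crucial input is the bound $|e_i| \le 2 < 3$, and it is exactly this that makes the argument base-independent.

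For (ii), I would use the standard fact that the lexicographically first $3$-free sequence is the greedy one, and run a strong induction over the chosen value order. When the number $n$ being processed lies in $C$, it cannot complete a progression with two earlier accepted terms, since those are also in $C$ and part (i) forbids a nontrivial progression inside $C$; hence $n$ is accepted. When $n \notin C$, i.e.\ $n$ contains a $2$, I would exhibit explicit blockers: let $a$ be $n$ with every $2$ replaced by $0$, and let $b$ be $n$ with every nonzero digit replaced by $1$. Then position by position one checks $a_i + n_i - 2 b_i = 0$, so $a + n = 2b$ holds identically in the digits, while $b - a$ and $n - b$ are positive sums of powers of the base, giving $a < b < n$. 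Since $a, b \in C$ and are smaller than $n$, they have already been accepted, so $n$ is blocked and rejected.

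The point I want to stress, and the part needing the most care, is that every relation used in (ii) — the progression $a + n = 2b$ and the inequalities $a < b < n$ — holds at the level of individual digit positions (as an identity of coefficients, and as positive coefficient-wise differences), so it is valid simultaneously for the base-$3$ and base-$3/2$ interpretations, both of which are linear in the digits. Consequently the same set $C$ is the greedy $3$-free sequence in both orders. I expect the main obstacle to be making the base-$3/2$ instance of (i) fully rigorous: because base-$3/2$ values are not integers, one must first clear denominators and then confirm that the extra factors $2^{m-i}$ are harmless modulo $3$. Once this is in place, together with the observation that the blocking construction is purely positional, the two orders are dispatched by one and the same argument.
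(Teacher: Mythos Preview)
Your proposal is correct and follows essentially the same strategy as the paper: the same digit-wise blockers $a$ (replace $2$'s by $0$) and $b$ (replace nonzero digits by $1$) for the greedy step, and the same observation that all the relevant relations $a+n=2b$, $a<b<n$ hold position-by-position and therefore transfer to either base.

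The one place you diverge is in part~(i). The paper argues directly that $2b$ has only digits $0$ and $2$, so $2b-a$ avoids a $2$ only if the $1$'s of $a$ line up exactly with the $2$'s of $2b$, forcing $a=b$; it then invokes ``no carries'' to say this works in any base. Your $3$-adic valuation argument---clearing denominators by $2^m$ in the base-$3/2$ case and noting that the extra factors $2^{m-i}$ are units modulo $3$---reaches the same conclusion more systematically and makes the base-$3/2$ instance fully rigorous, which is exactly the point you flagged as needing care.
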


\begin{proof} The first part of the proof is similar to the corresponding proof for the Stanley sequence starting 0, 1. 

Any integer-like number $x$ that has a digit 2 in base 3/2 can be written in the form $2b-a$, where $a$ and $b$ are integer-like numbers without a two in their $3/2$ representation and $b>a$. For example, $x=20211022021220220121111021 = 2\cdot 10111011011110110111111011-00011000001000000101111001$.
We can choose $b$ by replacing ones in $x$ with zeros and changing twos to ones; we can choose $a$ by changing all the digits 2 in $x$ to zero. Notice that $a,b < x$ in both orders.

Next, no three different integer-like numbers without a 2 in their base 3/2 representation can be in an arithmetic progression. To see why, suppose they were in such a progression. Let the numbers be $a$, $b$, and $2b-a$. If $a$ and $b$ are without a 2, then $2b$ is all 2s and 0s, and $2b$ would need all its digit 2s lined up with all of $a$'s digit 1s for $2b-a$ to not have a 2 remaining after subtraction. But then $a$ and $b$ are the same number, leading to a contradiction. This is the same argument as in base 10. As there are no carries in the argument, the argument works in any base.

We showed that the sequence of integer-like numbers without twos in base 3/2 is a 3-free sequence. Now we need to show that it is lexicographically the first in both orderings. 

The sequence starts with 0 and 1 in both cases.

We continue by induction. Assume by induction that the first $n$ numbers are the integer-like numbers without a 2 in base 3/2, the $n$-th number being $y$. We know that the next number $z$ without a 2 is valid, and we must prove it is of the smallest value in both orderings. Suppose it is not of minimum value, and the next term is instead a number $x$ between $y$ and $z$. Then $x$ must contain a 2. As we saw before, $x$ can be represented as $2b-a$ for $b$ and $a$ two numbers without a 2. As $a, b<x$ in both orderings, then $a$ and $b$ must both be among the first $n$ terms of the sequence. Then $a$, $b$, $x$ form a 3-term arithmetic progression, leading to a contradiction. Therefore, the next 2-free integer-like number when written in base 3/2 is the next term of the sequence.
\end{proof}

We denote the sequence of integer-like numbers that contain only zeros and ones in base 3/2 and arranged in the dictionary order as $\mathcal{S}_0$. Now we want to consider a set of sequences $\mathcal{S}_k$, where $\mathcal{S}_k = \mathcal{S}_0 +2k$ in base 3/2. We show several properties of these sequences.

\begin{lemma}
\begin{enumerate}
\item Each sequence $\mathcal{S}_k$ is 3-free.
\item Sequences $\mathcal{S}_k$ do not overlap.
\item Every integer-like number belongs to one of the sequences.
\item $\mathcal{S}_k$ is the lexicographically first sequence with no 3-term arithmetic progression chosen from the set of numbers $\cup_{i \geq k} \mathcal{S}_i$ when we use the value order.
\end{enumerate}
\end{lemma}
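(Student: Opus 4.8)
The plan is to reduce all four parts to a single structural fact about the operation ``add $2$'' on integer-like numbers, which I will denote $T(x)=x+2$ (computed in base $3/2$, i.e.\ by the $2 \leftarrow 3$ machine). Part~(1) needs no work beyond the previous lemma: a translate of a $3$-free set is $3$-free, since $a,b,c$ form an arithmetic progression if and only if $a+2k,b+2k,c+2k$ do, so $\mathcal{S}_k=\mathcal{S}_0+2k$ inherits $3$-freeness from $\mathcal{S}_0$.

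The crux, and the main obstacle, is the following claim about $T$: the map $T\colon V\to V$ on the set $V$ of all integer-like numbers is injective, and its image is exactly the set of integer-like numbers whose base $3/2$ representation contains a digit $2$; equivalently $V\setminus T(V)=\mathcal{S}_0$. Injectivity is immediate because $T$ strictly increases value. For the image I would describe $T$ digit-by-digit: adding $2$ increases the digit in box $0$ by $2$, and a box holding $1$ or $2$ overflows, becoming $d-1$ and sending a carry of $2$ one box to the left. Tracing this, $T$ decrements the maximal run of nonzero low-order digits and turns the first $0$ it meets (appending a new box if necessary) into a $2$; hence $T(x)$ always contains a $2$. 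Conversely, given any $y$ with a $2$, locate its lowest $2$, say in box $p$ (so boxes $0,\dots,p-1$ hold only $0$s and $1$s); incrementing those boxes by $1$ and resetting box $p$ to $0$ produces a valid integer-like $x\in V$ with $T(x)=y$. This is the step to do carefully, but it is a finite, carry-free bookkeeping argument.

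Granting the claim, parts~(2) and~(3) become the statement that $(s,k)\mapsto s+2k$ is a bijection $\mathcal{S}_0\times\mathbb{Z}_{\ge 0}\to V$. For surjectivity (part~(3)) I would, starting from any $x$, apply $T^{-1}$ (subtract $2$) as long as the current number contains a $2$; each step lowers the value by $2$ and keeps us in $V$ by the claim, so after finitely many steps $k$ we reach an $s$ with no $2$, that is $s\in\mathcal{S}_0$, giving $x=s+2k\in\mathcal{S}_k$. For non-overlap (part~(2)) I would prove uniqueness of the decomposition by induction on the value of $x$: if $x\in\mathcal{S}_0$ it lies outside $T(V)$, so $k$ must be $0$; otherwise $x$ has a $2$, lies in $T(V)$, and $x-2\in V$ is its unique $T$-preimage, so any decomposition $x=s+2k$ forces $k\ge 1$ and $x-2=s+2(k-1)$, whose decomposition is unique by the inductive hypothesis. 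Thus each integer-like number lies in exactly one $\mathcal{S}_k$, which is precisely parts~(2) and~(3).

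Finally, for part~(4) I would exploit that, by parts~(2) and~(3), $\bigcup_{i\ge 0}\mathcal{S}_i=V$, whence $\bigcup_{i\ge k}\mathcal{S}_i=\{s+2i:s\in\mathcal{S}_0,\ i\ge k\}=2k+V$. The map $x\mapsto x+2k$ is a strictly increasing bijection $V\to 2k+V$ that preserves the value order and the three-term arithmetic progression relation, and it sends $\mathcal{S}_0$ to $\mathcal{S}_k$. Since the greedy lexicographically-first $3$-free subsequence of an ordered set depends only on its order type and its arithmetic-progression relation, and the previous lemma identifies $\mathcal{S}_0$ as the lexicographically-first $3$-free sequence in $V$ under the value order, translating by $2k$ shows $\mathcal{S}_k$ is the lexicographically-first $3$-free sequence in $\bigcup_{i\ge k}\mathcal{S}_i$ under the value order, as required. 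The only genuinely delicate point in the whole argument is the image characterization of $T$ in the second paragraph; everything else is formal.
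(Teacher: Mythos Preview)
Your proof is correct and, for parts (1)--(3), follows essentially the same line as the paper: both arguments rest on the single observation that adding $2$ to any integer-like number in base $3/2$ necessarily produces a digit $2$ (the paper phrases this as ``a carry always generates a two''), and hence that repeatedly subtracting $2$ from a number containing a $2$ eventually lands in $\mathcal{S}_0$. Your packaging of this fact as an explicit description of the map $T(x)=x+2$ and its image is more precise than the paper's, but the content is the same.

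Where you genuinely diverge is part (4). The paper argues by induction on $k$: assuming $\mathcal{S}_k$ is the greedy sequence on $\bigcup_{i\ge k}\mathcal{S}_i$, it takes the greedy sequence $F$ on $\bigcup_{i\ge k+1}\mathcal{S}_i$, notes every element of $F$ contains a $2$, subtracts $2$ termwise, and argues the result must be $\mathcal{S}_k$ by the inductive hypothesis. Your argument instead observes in one stroke that $\bigcup_{i\ge k}\mathcal{S}_i=2k+V$ and that the translation $x\mapsto x+2k$ is an order- and AP-preserving bijection $V\to 2k+V$, so it transports the greedy $3$-free sequence $\mathcal{S}_0$ of $V$ to the greedy $3$-free sequence of $2k+V$, namely $\mathcal{S}_k$. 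This is cleaner and more conceptual: it replaces an induction (whose inductive step in the paper is somewhat loosely written) by a single invariance principle, at the cost of needing parts (2)--(3) already in hand to identify $\bigcup_{i\ge k}\mathcal{S}_i$ with $2k+V$.
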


\begin{proof}
1. Each sequence $\mathcal{S}_k$ does not contain a 3-term arithmetic progression. This follows from the fact that each sequence is a constant plus $\mathcal{S}_0$ and $\mathcal{S}_0$ does not contain a 3-term arithmetic progression.

2. Sequences $\mathcal{S}_k$ do not overlap. Consider an element $a_i$ in $\mathcal{S}_0$. We start by showing that $a_i + n$, for any integer $n > 1$, does not belong to $\mathcal{S}_0$. Indeed, when we add an integer $n > 1$, we either get 2 as the last digit, or we have a carry. A carry always generates a two. That means, in any sequence $\mathcal{S}_k$ no two numbers differ by an integer more than 1. That means $\mathcal{S}_k$ and $\mathcal{S}_j$ do not overlap for any $k \neq j$.

3. Every integer-like number belongs to one of the sequences. This can be proven by showing that every integer-like number in base 3/2 comes down to a number with only 1 and 0 by subtracting 2. Indeed, if a number contains a 2, then we can always subtract a 2 from it and get a positive integer-like number. We continue subtracting 2 while we have a 2 in the number. As this process is finite we have to end with an integer-like number consisting only of ones and zeros in their base 3/2 representation.

4. We know that $\mathcal{S}_0$ is lexicographically first. We proceed by induction. Suppose for $j \leq k$ sequence $\mathcal{S}_j$ is lexicographically first by value in the set 
$\mathcal{S} \setminus \cup_{i \geq j} \mathcal{S}_i$. Consider lexicographically first sequence $F$ in this set $\mathcal{S} \setminus \cup_{i \geq k} \mathcal{S}_i$. Notice that every element in $F$ has to contain a 2 in its base 3/2 representation. That means if, we subtract a 2 from every element of $F$ we get integer-like numbers in the set $\mathcal{S} \setminus \cup_{i \geq k-1} \mathcal{S}_i$. It has to be lexicographically first, so it has to equal $\mathcal{S}_k$. Thus, sequence $F$ has to equal $\mathcal{S}_{k+1}$.
\end{proof}

We later need one more property about sequences $\mathcal{S}_i$ written in base 3/2. But first a definition. We say that a set of numbers with digits 0, 1, and 2 satisfies \textit{two-out-of-three property} if the following holds:
\begin{itemize}
\item The last digit of every number in the set uses two out of three possible digits.
\item Numbers in the set that end with $x$ can have only two possibilities for a digit before $x$, and both possibilities are realized.
\end{itemize}

\begin{lemma}
Sequences $\mathcal{S}_n$ when written in base 3/2 satisfies two-out-of three property.
\end{lemma}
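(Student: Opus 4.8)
The plan is to recast the property as a statement about the sets of length-$j$ suffixes of $\mathcal{S}_n$ and to show that the operation ``add $2$'' acts on these suffixes as a bijection that commutes with truncation. For a set of integer-like strings $\mathcal{T}$, let $L_j(\mathcal{T})$ denote the set of last-$j$-digit blocks occurring among elements of $\mathcal{T}$ of length at least $j$. In this language the two-out-of-three property asserts exactly that $|L_1(\mathcal{S}_n)| = 2$ and that for every $x \in L_j(\mathcal{S}_n)$ there are exactly two digits $c$ with $cx \in L_{j+1}(\mathcal{S}_n)$.

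First I would record a carry-locality fact: adding two dots to box $0$ of a $2 \leftarrow 3$ machine only ever pushes dots to the left, so the last $j$ digits of $s+2$ depend only on the last $j$ digits of $s$. Consequently ``add $2$'' induces a well-defined map $T_j$ on the set of $3^j$ length-$j$ strings, and I claim $T_j$ is a bijection; its inverse is the analogous ``subtract $2$'' operation confined to the same window (equivalently, bijectivity follows from the fact, quoted in Section~\ref{sec:base32}, that the last $j$ digits of the integers repeat with exact period $3^j$, so that $T_j$ is conjugate to translation by $2$ on $\mathbb{Z}/3^j\mathbb{Z}$). The same locality shows that truncation $\pi$, deleting the leading digit of a block, intertwines the two maps: $\pi \circ T_{j+1} = T_j \circ \pi$.

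With these tools the computation is short. Because $\mathcal{S}_0$ consists of all strings in $0$ and $1$, we have $L_j(\mathcal{S}_0) = \{0,1\}^j$, and since $\mathcal{S}_n = \mathcal{S}_0 + 2n$ is obtained by applying ``add $2$'' a total of $n$ times, $L_j(\mathcal{S}_n) = T_j^{\,n}(\{0,1\}^j)$. Bijectivity of $T_1$ (a $3$-cycle on $\{0,1,2\}$) immediately gives $|L_1(\mathcal{S}_n)| = 2$, which is the first bullet. For the second, fix $x \in L_j(\mathcal{S}_n)$ and set $x' = T_j^{-n}(x) \in \{0,1\}^j$. Using the intertwining relation, a block $cx$ lies in $L_{j+1}(\mathcal{S}_n) = T_{j+1}^{\,n}(\{0,1\}^{j+1})$ if and only if $T_{j+1}^{-n}(cx)$ is a binary block projecting to $x'$; but the only binary blocks over $x'$ are $0x'$ and $1x'$. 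Hence the admissible digits $c$ are precisely the leading digits of $T_{j+1}^{\,n}(0x')$ and $T_{j+1}^{\,n}(1x')$, giving exactly two possibilities, both realized (and distinct, since $T_{j+1}^{\,n}$ is injective).

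The main obstacle is the groundwork rather than the final count: I must pin down carry-locality and the bijectivity of $T_j$ cleanly, and I must justify $L_j(\mathcal{S}_n) = T_j^{\,n}(\{0,1\}^j)$ despite the fact that adding $2n$ can change a string's length. The latter is handled by choosing long representatives: for any target block I pad a binary string with a long run of zeros, which absorbs all carries produced by the $n$ additions and keeps the leading digit in place, so that every element of $T_j^{\,n}(\{0,1\}^j)$ is genuinely realized as a suffix. Conceptually, the whole statement comes down to the remark that restricting $\mathcal{S}_0$ to the digits $0$ and $1$ leaves exactly two of the three possible blocks over each shorter block, and that the bijection $T_j$ transports this ``two out of three'' count unchanged to every $\mathcal{S}_n$.
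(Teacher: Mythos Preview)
Your argument is correct and follows essentially the same idea as the paper's proof: both subtract $2n$ to land back in $\mathcal{S}_0=\{0,1\}^*$, use that every binary suffix has exactly the two extensions $0$ and $1$, and then add $2n$ back, relying on the fact that this operation is a bijection on length-$j$ suffixes. Your version simply packages this more formally via the maps $T_j$, their bijectivity, and the intertwining $\pi\circ T_{j+1}=T_j\circ\pi$, and it is a bit more careful than the paper about why both extensions are actually realized (the padding argument).
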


\begin{proof}
Sequence $\mathcal{S}_0$ consists of numbers using zeros and ones. Thus it satisfies the two-out-of three property. Sequence $\mathcal{S}_n$ is constructed by adding the same number $x$ to all elements of $\mathcal{S}_0$ considered in base 3/2. 

Now we start from the last digit and use induction. The last digit has two possibilities: the last digit of $x$ and the last digit of $x+1$. Consider numbers in $\mathcal{S}_n$ that end with the same string of $m$ digits denoted here by $z$. When we subtract $x$ from all these numbers we get a set of numbers with a fixed string $y$ at the end. Before it, we can only have 0 or 1 as a digit. Now when we add $x$ to these numbers, we have exactly two possibilities for the number before the string. Both of them are realized.
\end{proof}

\subsection{Write $\mathcal{S}_i$ in base 3/2 and interpret them in base 3}

First recall a famous fact about 3-term integer arithmetic progressions.

\begin{lemma}
The last digits of a 3-term arithmetic progression written in base 3 are either all the same or all different.
\end{lemma}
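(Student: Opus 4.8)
The plan is to reduce the claim to a statement about residues modulo $3$, since the last digit of an integer written in base $3$ is exactly its residue mod $3$. Writing the progression as $a$, $a+d$, $a+2d$, I would set $r \equiv a \pmod 3$ and $s \equiv d \pmod 3$, so that the three last digits are precisely $r$, $r+s$, and $r+2s$, each read modulo $3$. This turns the entire problem into tracking an arithmetic progression inside $\{0,1,2\}$.

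From here I would split into two cases according to whether $s \equiv 0 \pmod 3$. First, if $s \equiv 0$, then each of the three terms is congruent to $r$, so all three last digits coincide, giving the "all the same" alternative. Second, if $s \not\equiv 0$, then $s \in \{1,2\}$ is invertible modulo $3$, so the map $k \mapsto r + ks$ is a bijection on $\{0,1,2\}$; applying it to $k = 0,1,2$ produces three distinct residues, which gives the "all different" alternative. Since these two cases are exhaustive, the lemma follows.

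The argument is entirely elementary, so there is no serious obstacle to overcome; the only point worth stating carefully is that $3$ is prime, which is exactly what guarantees that a nonzero common difference is a unit modulo $3$ and thereby forces the three residues to be distinct. It is worth noting that this is the same mod-$3$ phenomenon underlying the Stanley-sequence structure discussed earlier, where the appearance of a digit $2$ in base $3$ serves as the obstruction to forming progressions.
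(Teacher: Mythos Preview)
Your argument is correct and is the standard one: reduce to residues modulo $3$ and use that $3$ is prime so a nonzero common difference acts as a bijection on $\mathbb{Z}/3\mathbb{Z}$. The paper itself does not supply a proof of this lemma at all; it simply introduces it as ``a famous fact about 3-term integer arithmetic progressions'' and moves on, so there is nothing to compare against beyond noting that your write-up fills in exactly the elementary verification the paper omits. The closing remark tying this back to the Stanley-sequence discussion is accurate context but not needed for the proof.
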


Before proceeding we need the following statement about sequences $\mathcal{S}_i$.

\begin{lemma}
Sequence $\mathcal{S}_k$ written in base 3/2 and then interpreted in base 3 is 3-free.
\end{lemma}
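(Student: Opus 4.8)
The plan is to argue by contradiction, peeling off base-3 digits from the right and using the two-out-of-three property to kill every step. Suppose three distinct elements of $\mathcal{S}_k$, interpreted in base 3, formed a 3-term arithmetic progression; call the corresponding strings $u$, $v$, $w$ and their base-3 values $U$, $V$, $W$, with $U+W=2V$. The engine of the argument is the preceding lemma: the base-3 last digits of $U$, $V$, $W$ are either all equal or all distinct. Since $\mathcal{S}_k$ satisfies the two-out-of-three property, the last digits of its elements realize only two of the three values $0,1,2$, so the ``all distinct'' alternative is impossible, and the three last digits must coincide.

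Having forced a common last digit $d$, I would strip it off: writing $U=3U'+d$ and similarly for $V,W$, the relation $U+W=2V$ immediately gives $U'+W'=2V'$, so the truncated strings still form a base-3 arithmetic progression. The point is that the second clause of the two-out-of-three property governs exactly this situation: among the elements of $\mathcal{S}_k$ that end in $d$, the digit standing in front of $d$ again realizes only two of the three possible values. Feeding the truncated progression back into the last-digit lemma, the same dichotomy forces these digits to agree as well, and the peeling can continue. Iterating, the three strings are forced to agree digit-by-digit from the right, hence are equal, contradicting that the progression had three distinct terms.

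The step I expect to be the main obstacle is the bookkeeping when the three strings have \emph{different lengths}, since the recursion must stop once the shortest string is exhausted. Once the shortest element, say $u$, has had all of its digits matched and removed, its truncation is the empty string, contributing the value $0$, and the surviving relation reads $W'=2V'$; here the two-out-of-three clause can no longer be applied symmetrically, because $u$ has no digit in front of its former leading digit. The subtlety is that the bare two-out-of-three property does not by itself close this case: if the two admissible digit values at some level do not include $0$, then the exhausted string (contributing $0$) together with the two genuine digits can form an ``all distinct'' triple, and the all-different alternative propagates instead of being ruled out. Resolving this therefore requires tracking the leading digits of the shorter strings against the \emph{actual} pair of digit values permitted at each level, and almost certainly drawing on more structure than the abstract property alone, for instance the explicit recursive construction $\mathcal{S}_k=\mathcal{S}_0+2k$ or the fact (established earlier) that each $\mathcal{S}_k$ is 3-free in its base-3/2 value. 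Everywhere else the argument is the clean last-digit-lemma-plus-two-out-of-three induction described above; it is precisely this unequal-length boundary analysis that I expect to demand the most care.
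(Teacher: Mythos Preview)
Your digit-peeling argument is exactly the paper's proof: assume a $3$-term progression, invoke the last-digit lemma together with the two-out-of-three property to force the rightmost digits to agree, truncate, and iterate until the three strings coincide. The paper dispatches the whole induction in one line, ``By continuing, we get that the numbers $a,b,c$ are equal to each other,'' and does \emph{not} single out the unequal-length boundary you worry about.

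That said, your worry is resolved without leaving the two-out-of-three framework, and does not need the extra structure you anticipate. Pad all three strings with leading zeros to a common length. When the shortest string $u$ is exhausted, the common suffix $z$ established so far is, as a number, $u$ itself; since $u\in\mathcal{S}_k$, among the elements of $\mathcal{S}_k$ ending in $z$ there is one---namely $u$---whose ``digit before $z$'' is a (padded) $0$. Hence $0$ is one of the two admissible values at that level, the triple of next digits lies in a two-element set containing $0$, the all-distinct alternative is still blocked, and the common value is forced to be $0$ (because the truncation of $u$ already contributes $0$). The same reasoning repeats at every later level, so the padded zeros of $u$ continue to match zeros in $v$ and $w$, and you conclude $u=v=w$ exactly as in the equal-length case. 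In short: your obstacle dissolves once you observe that the exhausted string itself witnesses $0$ as one of the two permitted digits.
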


\begin{proof}
Suppose sequence $\mathcal{S}_k$ written in base 3/2 and then interpreted in base 3 contains an arithmetic progression $a,b,c$. There are only two possibilities for the last digit. That means $a,b,c$ have the same last digit in base 3. We subtract this digit and divide by 3. We get numbers $a',b'c'$ that are numbers $a,b,c$ without the last digit. They have to form the arithmetic progression. By our two-out-of-three property, as the last digit is fixed, there are only two possibilities for the digit before it, which is now the last digit in the new progression $a',b'c'$. It follows that the last digit in $a',b'c'$ is the same for all three numbers. By continuing, we get that the numbers $a,b,c$ are equal to each other, leading to a contradiction.
\end{proof}

The following statement which we did not prove is the last step that is needed for our conjecture: Each sequence $\mathcal{S}_n$ is lexicographically first 3-free sequence on the available numbers in base 3 order.

Now we state our main conjecture.

\begin{conjecture}
Sequence $\mathcal{S}_k$ written in base 3/2 but interpreted in base 3 is sequence $S_k$.
\end{conjecture}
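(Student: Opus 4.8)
The plan is to deduce the conjecture from three properties of the image sequences, where $T_k$ denotes $\mathcal{S}_k$ written in base $3/2$ and then reinterpreted in base $3$. First, reinterpreting a string of digits $0,1,2$ in base $3$ is a bijection from integer-like numbers onto $\mathbb{Z}_{\geq 0}$, since such a string is literally the ternary expansion of its value; hence the partition of integer-like numbers into the $\mathcal{S}_i$, established earlier, transports to a partition of $\mathbb{Z}_{\geq 0}$ into the sets $T_i$. Second, each $T_k$ is $3$-free, which is exactly the lemma already proved. Third, and this is the crux, $T_k$ should be the greedy (lexicographically first by value) $3$-free sequence drawn from $\bigcup_{i\geq k} T_i$. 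Granting the third property, I would finish by induction on $k$: assuming $T_i=S_i$ for all $i<k$, the partition gives $\bigcup_{i\geq k}T_i=\mathbb{Z}_{\geq 0}\setminus\bigcup_{i<k}S_i$, which is precisely the pool of available integers from which $S_k$ is greedily built, so the greedy property forces $T_k=S_k$.

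To attack the third property I would reduce it to a completeness statement. The greedy $3$-free sequence on a pool $R$ is characterized uniquely as the $3$-free set $G\subseteq R$ with the property that every $r\in R\setminus G$ is the largest term of a $3$-term arithmetic progression $p,q,r$ with $p,q\in G$ and $p<q<r$; the uniqueness follows by comparing $G$ with any competitor at their smallest point of disagreement. Since $3$-freeness of $T_k$ is already known, it remains to prove that each element of $\bigcup_{i>k}T_i$ tops a $3$-AP, in base-$3$ value, whose two smaller terms lie in $T_k$. The engine for producing these progressions is the digit decomposition behind the paper's base-$3/2$ argument: if a ternary string $x$ has a digit $2$, let $b$ be $x$ with every $2$ replaced by $1$ (keeping all $0$s and $1$s) and set $a=2b-x$, which zeroes out the $2$s while keeping the $1$s. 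Then $a$ and $b$ are $2$-free, $x=2b-a$, and $a\le b\le x$ digitwise with strict inequalities wherever $x$ has a $2$, so that $a<b<x$ holds simultaneously in the base-$3$ and the base-$3/2$ orders. This blocks every string containing a $2$ from a greedy sequence in either order at once.

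The hard part will be membership: the completeness argument needs the two smaller terms $a,b$ to lie in $T_k$ \emph{itself}, not merely to be $2$-free or to sit in some $T_i$. Because the sequences $\mathcal{S}_k=\mathcal{S}_0+2k$ are defined by additive shifts performed in base $3/2$, whereas the progressions we need are additive in base $3$, and the reinterpretation map does not commute with base-$3/2$ addition (adding $2$ can trigger carries that scramble the digit string), I cannot simply transport the already-proved base-$3/2$ lexicographic-minimality lemma over to base $3$. To certify that the decomposition's terms $a,b$ fall in the correct block $T_k$, I would run a simultaneous induction on $k$ and on the number of ternary digits, peeling off the last digit and invoking the two-out-of-three property together with the lemma that the last base-$3$ digits of a $3$-AP are either all equal or all distinct; this is the same mechanism that established that $T_k$ is $3$-free, now repurposed to control which sequence the progression's endpoints belong to. Closing this membership gap is exactly the unproved statement flagged just before the conjecture, namely that each $\mathcal{S}_n$ is lexicographically first in the base-$3$ order, and it is the step I expect to resist a short argument.
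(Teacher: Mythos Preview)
The statement you are asked to prove is explicitly labeled a \emph{conjecture} in the paper, and the paper does not supply a proof. Immediately before stating it, the authors write that the missing step is precisely that each $\mathcal{S}_n$ is the lexicographically first $3$-free sequence on the remaining pool \emph{in the base-$3$ order}; everything else (the partition of $\mathbb{Z}_{\ge 0}$ into the $T_k$, and the $3$-freeness of each $T_k$) they have already established via the two-out-of-three lemma.

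Your proposal tracks this exactly. You correctly reduce the conjecture to the same three ingredients, you note that the first two are already in hand, and you isolate the third---greedy minimality of $T_k$ in the base-$3$ value order---as the crux. Your attempt to close it via the completeness characterization of greedy sequences and the digitwise decomposition $x=2b-a$ is a reasonable line of attack, but as you yourself say, the membership step (forcing $a$ and $b$ to lie in $T_k$ rather than in some other $T_i$) is the obstruction, because the reinterpretation map does not intertwine base-$3/2$ addition with base-$3$ addition. That is precisely the gap the paper leaves open. So your proposal is not a proof, and neither is anything in the paper; you have matched the authors' partial argument and correctly identified where it stalls.
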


\begin{corollary}
The Stanley cross-sequence can be defined as following: Take even numbers, write them in base 3/2, interpret the resulting string as numbers written in ternary. 
\end{corollary}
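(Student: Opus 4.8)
The plan is to deduce the corollary directly from the Conjecture, so that the only genuine work is pinning down the \emph{first} (that is, least) term. By definition A265316$(n)$ is the first term of the greedy $3$-free sequence $S_n$, and since each $S_n$ is listed in increasing order, its first term is simply $\min S_n$. Granting the Conjecture, $S_n$ is exactly the set of base-$3$ interpretations of the strings in $\mathcal{S}_n$; hence A265316$(n)$ equals the smallest base-$3$ value attained by a string of $\mathcal{S}_n$. Because $\mathcal{S}_n=\mathcal{S}_0+2n$ in base $3/2$, I would identify this minimiser and show it is the base-$3/2$ representation of the integer $2n$. Once that is done the corollary reads off immediately: as $n$ runs through $0,1,2,\dots$ the offset $2n$ runs through all even numbers, so the recipe is precisely ``write the even number $2n$ in base $3/2$ and read the string in ternary.''

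\textbf{Locating the least term.} Since $\mathcal{S}_n=\mathcal{S}_0+2n$ and $0$ is the least element of $\mathcal{S}_0$ in every order, its image $0+2n$, which is just the base-$3/2$ representation of $2n$, is the least element of $\mathcal{S}_n$ in base-$3/2$ value. If one reads the Conjecture as a first-term-preserving correspondence of sequences, this already maps to the first term of $S_n$ and we are finished. To argue the same from the weaker set-equality I would instead show the $2n$-string is least in \emph{base-$3$} value as well. Here I use that the base-$3$ order on integer-like strings is ``fewest digits first, then ordinary lexicographic,'' because a length-$d$ string with nonzero leading digit has base-$3$ value in $[3^{d-1},\,3^{d}-1]$, and these ranges are disjoint for distinct $d$. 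It then suffices to check two points: (i) no element $2n+s$ with $s\neq0$ uses fewer base-$3/2$ digits than $2n$ does, which holds because every such value is at least $2n+1$ while the only shorter expansions lie just above $2n$, a range the $0/1$-string offsets cannot reach; and (ii) among the minimal-length elements the $2n$-string is lexicographically first, which the two-out-of-three property of $\mathcal{S}_n$ constrains digit by digit.

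\textbf{Main obstacle.} The decisive gap is the Conjecture itself, which is unproven: everything above is conditional on $\mathcal{S}_n$, written in base $3/2$ and read in base $3$, being equal to $S_n$. Assuming it, the remaining subtlety, and the step most likely to conceal an error, is that ``least'' must be taken in the base-$3$ order, whereas the offset structure $\mathcal{S}_n=\mathcal{S}_0+2n$ only shows immediately that $2n$ is least in the base-$3/2$ value order. Because these two orders genuinely disagree on integer-like strings (for instance $10<2$ in the base-$3/2$ order but $10>2$ in the base-$3$ order), the upgrade from base-$3/2$-least to base-$3$-least is the one place that needs the length-and-leading-digit bookkeeping sketched above rather than a one-line appeal to minimality.
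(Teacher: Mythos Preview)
The paper does not actually prove this Corollary; it is stated immediately after the Conjecture as a direct consequence. Implicitly the paper reads the Conjecture as an equality of \emph{ordered} sequences (note the sentence just before it: ``Each sequence $\mathcal{S}_n$ is lexicographically first 3-free sequence on the available numbers in base 3 order''), so that the first term of $S_n$, namely A265316$(n)$, is simply the first term of $\mathcal{S}_n=\mathcal{S}_0+2n$, which is the base-$3/2$ string for $2n$ read in ternary. Your ``simple'' branch is exactly this argument, and you correctly flag that everything is conditional on the unproven Conjecture. In that sense your proposal matches the paper.

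Where you go beyond the paper is in your attempt to upgrade from a set-level reading of the Conjecture to the base-$3$ minimality of the $2n$-string. That extra work is not needed for the paper's intended reading, and as written it is not rigorous. In particular, your step~(i) (``no element $2n+s$ with $s\neq 0$ uses fewer base-$3/2$ digits than $2n$'') does not follow from the reason you give: digit length in base $3/2$ is \emph{not} monotone in value (e.g., $10<2$ in value but $10$ has more digits; conversely $222$ has value $9.5$ while $2100$ has value $9$), so ``every such value is at least $2n+1$'' by itself says nothing about digit count, and the phrase ``the only shorter expansions lie just above $2n$'' is neither precise nor justified. Your step~(ii) appeal to the two-out-of-three property is also only a sketch. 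If you want to salvage this route, you would need a genuine structural argument about how adding a $\{0,1\}$-string to the base-$3/2$ expansion of $2n$ interacts with carries; otherwise simply adopt the paper's sequence-level reading of the Conjecture and drop the extra paragraph.
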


Now we want to spend some time discussing sequences $\mathcal{S}_i$, for $i = 1,2,3$ in more detail.

\subsection{Examples}

We can easily describe the first few sequences $\mathcal{S}_i$ in terms of their representation in base 3/2:

\begin{itemize}
\item $\mathcal{S}_0$ are numbers written with 0 and 1: 0, 1, 10, 11, 100, and so on.
\item $\mathcal{S}_1$ are numbers that contain exactly one 2 that might be followed by zeros: 2, 12, 20, 102, 112, 120, 200, and so on.
\item $\mathcal{S}_2$ are numbers such that the last digit is 1 or 2 and the rest is a substring from $\mathcal{S}_1$: 21, 22, 121, 122, 201, 202, and so on.
\item $\mathcal{S}_3$ are numbers such that the last two digits are from the set $\{10,11,20,21\}$ and the rest is a substring from $\mathcal{S}_1$. Equivalently, we can say that $\mathcal{S}_3$ has 0 and 1 as the last digit and the rest as $\mathcal{S}_2$: 210, 211, 220, 221, 1210, 1211, and so on.
\end{itemize}

\section{Different ways to write numbers in base 1.5}\label{sec:differentways}

Non-integer bases have been known for a long time. Consider a number $\beta > 1$. The value of $x=d_n\dots d_2d_1d_0.d_{-1}d_{-2}\dots d_{-m}$ is
\[\begin{aligned}
x&=\beta ^{n}d_{n}+\cdots +\beta ^{2}d_{2}+\beta d_{1}+d_{0}\\
&\qquad +\beta ^{-1}d_{-1}+\beta ^{-2}d_{-2}+\cdots +\beta ^{-m}d_{-m}.
\end{aligned}
\]
This representation of $x$ is called a $\beta$-expansion and was introduced by R\'{e}nyi in 1957 \cite{R} and later studied by Parry \cite{P}. In such representations the numbers $d_i$ are non-negative integers less than $\beta$. Every real number has at least one $\beta$-expansion. 

What we study is different. In our expansions we are more flexible. In case of base 3/2 we allow digits 1 and 2 in our expansions. In case of base 1.5 we allow a non-digit H in our expansions.

There is a famous greedy algorithm to write an integer $N$ in an integer base $b$. This algorithm was adapted for $\beta$-expansions by R\'{e}nyi \cite{R}. 

We are trying to represent $N$ as $d_kd_{k-1}\ldots d_1d_0$. In this algorithm we start with finding the left-most digit $d_k$:

\begin{enumerate}
\item \textbf{Find the total number of digits.} Find the largest power $k$ so that $N \geq b^k$. Then $N$ has $k+1$ digits in base $b$. 
\item \textbf{Find the left-most digit.} Then the digit $d_k$ is equal $\lfloor N/b^k \rfloor$. 
\item \textbf{Repeat.} Consider the new integer $N - d_kb^k$ and continue recursively using zeros when necessary. 
\end{enumerate}

This algorithm chooses the lexicographically largest expansion in case there are several expansions. There are other expansions that choose digits 0 and 1. They were studied by many authors, starting with Kempner \cite{K}.

\textbf{Use only zeros and ones.} We find $k$ so that $1.5^k < N < 1.5^{k+1}$. The first non-zero digit is $d_k=1$. Repeat with the difference. For example, 
\[2= 10.010000010010010100000000010000001\ldots.\]
This is the same representation as used by R\'{e}nyi \cite{R} that is the largest expansion lexicographically. Another famous expansion is the lazy expansion:
\[2=0.11111\ldots.\]

As we can also use H for a digit, the number of possible expansions increases. Here are some natural examples. 

\begin{enumerate}
\item \textbf{A finite expansion for integers.} The base is designed in such a way that any integer has a finite expansion. For example, 
\[2 = 1H.\]
We show later in this section how to represent any integer as a finite expansion.
\item \textbf{Use only Hs and zeros.} Find $k$ so that $\frac{1}{2}\cdot 1.5^k < N < \frac{1}{2}\cdot 1.5^{k+1}$. The first non-zero digit is $d_k$. If follows that $N < 1.5^k$. Therefore, $d_k = $H. In this representation every number is represented using only 0 and H. For example,
\[2=\text{H}000.0\text{H}00\text{H}00\text{H}000\text{H}000\text{H}00\text{H}00\text{H}\ldots.\]
This extension chooses the first digit as far to the left as possible. It gives the largest expansion lexicographically when using digits 0, 1, and H.
\item \textbf{Use only Hs and ones.}  We can use the fact that 1=0.HHH... which we represent as 0.$\overline{\mbox{H}}$. Similarly $1.5 = \text{H}.\overline{\mbox{H}}$, and so on. A $k$-th power of 1.5, for $k > 0$, is represented with $k$ digits H before radix followed by $\overline{\mbox{H}}$. If $k<0$, then $1.5^k$ is represented with $k$ zeros after the period followed by Hs. Consider a positive real number $x$ such that $1.5^k\leq x <1.5^{k+1}$. Denote the difference $x-1.5^k$ by $d$. We can represent $d$ using only zeros and Hs as above. Moreover, the largest non-zero digit of $d$ in this representation is less than $k$ as $d <  1.5^{k+1}-1.5^k=0.5\cdot 1.5^k$. When we add these representations of $d$ and $1.5^k$, we get $x$ represented with only Hs and ones starting from the significant digit. For example, if $x = 2$, then $k=1$ and $d=0.5$. Now we need to represent 0.5 using only Hs, that is $d= $H. Summing up, with this algorithm 2 is represented as 
\[2=1.\text{HHHHHHHHHHHHHHH}\ldots\]
\item \textbf{Make the first digit as close to the number as possible.} Consider the set of numbers, $\frac{1}{2}\cdot 1.5^k$, $1.5^k$. Pick the largest number from this set that does not exceed $N$. Repeat with the difference. With this algorithm 2 is represented as 
\[2= \text{H}000.00100000\text{H}000\text{H}000\text{H}\ldots.\]
\end{enumerate}

Notice, that the second algorithm and R\'{e}nyi's algorithm are connected. If we represent $x$ using only digits 0 and H, then by replacing H with 1 we get R\'{e}nyi's representation of $2x$ with only 0 and 1.

We now look at the numbers that are not extended past the decimal points. We already studied such numbers in base 3/2. Similar to that, we call such numbers in base 1.5 \textit{integer-like}. An integer-like number is either an integer or a non-integer, but every integer has an integer-like representation.

It is well-known that while infinite $\beta$-expansions are not unique, finite $\beta$-expansions are unique. A similar statement is true in base 1.5.

\begin{lemma}
No two integer-like numbers in base 1.5 have the same integer-like representation.
\end{lemma}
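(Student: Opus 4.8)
The plan is to establish the uniqueness statement in the sharp form that the map sending an integer-like string to its value is injective, so that two distinct integer-like representations cannot evaluate to the same number. Writing an integer-like string as $d_k d_{k-1}\ldots d_0$ with each $d_i \in \{0,\mathrm{H},1\}$, its value is $\sum_{i=0}^{k} v(d_i)(3/2)^i$, where $v(0)=0$, $v(\mathrm{H})=1/2$, and $v(1)=1$; this is just the $\beta$-expansion value formula from Section~\ref{sec:differentways} specialized to $\beta=3/2$, with the convention that $\mathrm{H}$ contributes $1/2$.

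First I would suppose, for contradiction, that two distinct integer-like strings $s$ and $t$ share a value. After padding the shorter one with leading zeros, they become $d_k\ldots d_0$ and $e_k\ldots e_0$, and subtracting the two value expressions gives
\[
\sum_{i=0}^{k} c_i \left(\frac{3}{2}\right)^i = 0, \qquad c_i := v(d_i) - v(e_i).
\]
Since each of $v(d_i), v(e_i)$ lies in $\{0,1/2,1\}$, every difference $c_i$ lies in $\{-1,-1/2,0,1/2,1\}$. Setting $m_i := 2c_i$ and multiplying the relation by $2^{k+1}$ clears all denominators, yielding the purely integral identity
\[
\sum_{i=0}^{k} m_i \, 3^i \, 2^{\,k-i} = 0, \qquad m_i \in \{-2,-1,0,1,2\}.
\]
Because $s \neq t$, the tuple $(m_i)$ is not identically zero.

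The crux is then a single divisibility observation applied to the lowest nonzero coefficient. Let $j$ be the smallest index with $m_j \neq 0$, so the identity reads $\sum_{i=j}^{k} m_i 3^i 2^{k-i} = 0$. Every term with $i > j$ is divisible by $3^{j+1}$, while the $i=j$ term equals $m_j 3^j 2^{k-j}$; dividing through by $3^j$ and reducing modulo $3$ forces $3 \mid m_j \, 2^{k-j}$, and since $2^{k-j}$ is coprime to $3$ this gives $3 \mid m_j$. But the nonzero values available to $m_j$ are $\{-2,-1,1,2\}$, none of which is a multiple of $3$ --- a contradiction. Hence no such pair $s \neq t$ exists, which is the claim.

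I expect the one genuinely load-bearing step to be the choice of modulus: reducing modulo $3$ at the \emph{lowest} nonzero digit rather than modulo $2$ at the top digit. A naive mod-$2$ argument on the leading coefficient only shows $m_k$ is even and then stalls, whereas the mod-$3$ argument succeeds precisely because doubling the digit-value differences lands in $\{-2,-1,0,1,2\}$, a set whose nonzero elements all avoid multiples of $3$. Everything else --- padding, clearing denominators, and bounding the $c_i$ --- is routine bookkeeping that I would carry out explicitly only to pin down the range of the $m_i$.
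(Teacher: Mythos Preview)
Your proof is correct and follows essentially the same approach as the paper: clear denominators by multiplying through by $2^{k+1}$, then use divisibility by $3$ at the lowest position where the two strings differ. The only cosmetic difference is that the paper first reduces to the case $a_0 \neq b_0$ by stripping or zeroing matching trailing digits, whereas you subtract first and then pick the smallest index $j$ with $m_j \neq 0$; these amount to the same maneuver.
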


\begin{proof}
Consider two different representations $a_ka_{k-1}\ldots a_0$ and $b_jb_{j-1}\ldots b_0$ of the same value. If $a_0 = b_0 \neq 0$, we can replace $a_0$ and $b_0$ in both numbers by zero, and still have two different representations with the same value. If $a_0 = b_0 = 0$, we can remove the last digit from both numbers while still having two different numbers with the same value. That means, we can assume that $a_0 \neq b_0$. Given that these numbers have the same value, we write:
\[\sum_{i=0}^k a_k\frac{3^i}{2^i} = \sum_{i=0}^k b_k\frac{3^i}{2^i},\]
where we replace H with 1/2. 
Now we multiply both sides by $2^{k+1}$. We get
\[\sum_{i=0}^k 2a_k3^i2^{k-i} = \sum_{i=0}^k 2b_k3^i2^{k-i}.\]
Both sides are integers. By taking this equation modulo 3, we get that
\[2^{k+1}a_0 = 2^{k+1}b_0.\]
Therefore,
\[a_0 = b_0.\]
This creates a contradiction with our assumption and proves the lemma.
\end{proof}

The sequence of natural numbers in base 1.5 starts as 0, 1, 1H, 1H0, 1H1, 1H0H, 1H10, 1H11, 1H0HH. 

Algorithm for incrementing a number by 1 in Base 1.5:

\texttt{Start at the last digit:}

\texttt{if 0: change to 1, stop}

\texttt{if 1: change to H, carry 1}

\texttt{if H: change to 0, carry 1}

\texttt{move one digit to the left}

In other words, start with the first zero to the right. If there are no zeros, pad one zero at the beginning. Then for the string of digits starting and including the last zero replace them in a cyclic order: $0 \rightarrow 1 \rightarrow H \rightarrow 0$. For example, the representation of 7 is 1H11. We pad it with 0, to get 01H11, then cycle these digits to 1H0HH, which is the base 1.5 representation of 8. As another example, the representation of 22 is 1H100H1, so 23 must be  1H1010H.

\section{Isomorphism}\label{sec:isomorphism}

There are a lot of similarities between bases 3/2 and 1.5. Is there a connection between the two bases? 

\begin{theorem}
Every number in base 1.5 is the same as the number with 2 times its value in base $\frac{3}{2}$ except with the digits 0, H, 1 replaced by 0, 1, 2 correspondingly.
\end{theorem}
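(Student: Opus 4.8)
The plan is to exhibit the claimed correspondence as a single value-doubling digit relabeling and then invoke uniqueness of representations to pin it down. First I would define a map $\phi$ on digits by $0 \mapsto 0$, $\mathrm{H} \mapsto 1$, $1 \mapsto 2$, and extend it to strings letterwise. The key observation is that $\phi$ is exactly multiplication by $2$ on digit \emph{values}: since $\mathrm{H}$ stands for $1/2$, the digit values $0,\,1/2,\,1$ are sent to $0,\,1,\,2$. As a relabeling of alphabets, $\phi$ is a bijection between strings over $\{0,\mathrm{H},1\}$ and strings over $\{0,1,2\}$, hence a bijection between integer-like base-$1.5$ numbers and integer-like base-$3/2$ numbers.

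Next I would compute values using the value formula (the earlier lemma), which applies verbatim to integer-like strings since those are exactly the strings $d_k d_{k-1}\cdots d_0$ with no radix point. If a base-$1.5$ string is $s = d_k d_{k-1}\cdots d_0$ with value $v = \sum_{i=0}^k d_i (3/2)^i$, then applying the base-$3/2$ value formula to $\phi(s)$ gives
\[
\sum_{i=0}^k (2 d_i)\frac{3^i}{2^i} = 2\sum_{i=0}^k d_i \frac{3^i}{2^i} = 2v.
\]
Thus $\phi(s)$ is an integer-like base-$3/2$ number whose value is exactly twice that of $s$. This is the entire arithmetic content of the statement.

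The remaining point, and the one needing care, is that ``the number with value $2v$ in base $3/2$'' is well-defined, i.e.\ that an integer-like base-$3/2$ representation is determined by its value. I would obtain this uniqueness in either of two ways. One is to transport the already-proved base-$1.5$ uniqueness lemma through $\phi$: if two integer-like base-$3/2$ strings shared a value $w$, their $\phi$-preimages would be two integer-like base-$1.5$ strings of value $w/2$, contradicting that lemma. The other is to rerun the same modular argument directly for base $3/2$, namely clear denominators by multiplying by $2^k$, reduce modulo $3$ to force the last digit, and induct on the string length to force all digits to agree. Either route shows that $\phi(s)$ is \emph{the} base-$3/2$ representation of $2v$, which is precisely the assertion.

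I do not expect a genuine obstacle; the content is essentially bookkeeping once uniqueness is in hand. The only place to be careful is the interface between arbitrary integer-like strings and the canonical $2\leftarrow 3$ machine output: for an actual integer $N$, the machine produces one distinguished string of value $2N$, and I must confirm it coincides with $\phi(s)$. This again follows from uniqueness, since the machine output and $\phi(s)$ are both integer-like base-$3/2$ strings of value $2N$, hence equal after the trivial normalization of leading zeros.
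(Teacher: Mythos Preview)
Your argument is correct and follows the same route as the paper: observe that the digit map $0,\mathrm{H},1\mapsto 0,1,2$ is multiplication by $2$ on digit values, push this through the power-sum value formula to get that the relabeled string has value $2v$, and then invoke uniqueness of integer-like representations. You are in fact more careful than the paper, which simply asserts uniqueness without spelling out either of the two justifications you give.
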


\begin{proof}
We start by noticing that multiplying 0, H, 1 by 2 produces 0, 1, 2 correspondingly. Now consider number $x$ in base 1.5. That means we can use the base 1.5 representation to represent $x$ as a sum of powers of $3/2$ with coefficient 0, 1/2, and 1. Multiplying by 2 we get a representation of $2x$ as a sum of powers of $3/2$ with coefficient 0, 1, and 2. As sums of powers match the representations in bases and the representation is unique, the theorem is proven.
\end{proof}

For example, 2 in base 1.5 is 1H. That means 4 in base 3/2 is 21. In addition, we know other ways to represent 2 in base 1.5. We can use them to show how 4 can be represented in base 3/2. 
\begin{enumerate}
\item \textbf{Only 0 and 1.} 
\[4=\text{1}000.0\text{1}00\text{1}00\text{1}000\text{1}000\text{1}00\text{1}00\text{1}\ldots.\]
\item \textbf{Only 0 and 2.} 
\[4= 20.020000020020020200000000020000002\ldots,\]
\item \textbf{Only 1 and 2.} 
\[4=2.1111111111111111111111\ldots,\]
\item \textbf{The smallest leftover.} 
\[4= 1000.00200000100010001\ldots.\]
\end{enumerate}

Because of the isomorphism, we can rewrite the properties in Section~\ref{sec:base32} in base 3/2 in our base 1/5:

\begin{itemize}
\item Every integer only uses digits 0, H, and 1.
\item Every positive integer starts with 1.
\item Every integer bigger than 3 starts with 1H followed by either 0 or 1.
\item The last digit repeats in a cycle of 3 numbers, the last two digits repeat in a cycle of 9 numbers, and so on. The last $k$ digits repeat in a cycle of $3^k$ numbers.
\item Removing one or several last digits of an integer in this base gives another integer in the base.
\end{itemize}

Here are some other parallels. Sequence A244040 is defined as a sum of digits of $n$ in fractional base 3/2. With respect to fractional base 1.5, A244040$(2n)$ is twice the sum of digits of $n$.

Also, sequence A256785 is related to base 1.5. It shows the numbers that have an integer digit sum in base 1.5: 1, 5, 11, 14, 20, 21, 22, 23, 26, 29, 30, 31, $\ldots$. Equivalently, these are numbers that have an even number of digits H in their representation. On the other hand, $2\cdot$ A256785 is the sequence of even numbers that have an even sum in base 3/2.

As another example, consider sequence A320035 that provides indices of integers in integer-like numbers written in ascending order in base 1.5. The same sequence is sequence of indices of even integers in the sequence of integer like numbers written in ascending order in base 3/2. The sequence for all integers in base 3/2 is now sequence A320272:
\[0, 1, 3, 6, 11, 17, 25, 34, 46, 60, 77, 96, 117, \ldots.\]


By the way, if we use the dictionary order, then indices or integers in integer-like numbers in base 3/2 are sequence A261691: 
\[0, 1, 2, 6, 7, 8, 21, 22, 23, 63, 64, 65, 69, 70, 71, 192, 193, 194, 207, 208, \ldots.\]

The similar sequence in base 1.5 is every other term in A261691:
\[0, 2, 7, 21, 23, 64, 69, 71, 193, 207, \ldots.\]
It is our mystery sequence A265316.

\section{Acknowledgements}
We are grateful to PRIMES STEP program for allowing us to do this research.

\end{document}